\newtheorem{theorem}{Theorem}[section]
\newtheorem{corollary}[theorem]{Corollary}
\newtheorem{lemma}[theorem]{Lemma}
\theoremstyle{definition}
\newtheorem{remark}[theorem]{Remark}
\numberwithin{equation}{section}
\newcommand{\racion}[2]{\mbox{\small$\frac{{#1}}{{#2}}$}}
\begin{document}

\title[Some generating functions for $q$-polynomials]
{Some generating functions for $q$-polynomials}

\author{Howard S. Cohl}
\author{Roberto S. Costas-Santos}
\author{Tanay V. Wakhare}
\date{20 July, 2017}
\keywords{Basic Hypergeometric Functions, Generating 
Functions, $q$-Polynomials}
\subjclass[2010]{33D45; 33C20}
\begin{abstract}
We obtain $q$-analogues of the Sylvester, Ces\`{a}ro, Pasternack, and
Bateman polynomials. We also derive generating functions for these polynomials.
\end{abstract}
\maketitle
\section{Introduction}
Trying to construct inverse Laplace transforms in order to 
better understand some radiation and conduction problems, 
H. Bateman introduced in \cite{bat1} the polynomial
\[
{\mathscr Z}_n(z)={}_2F_2\left(\!\!\begin{array}{c}-n, n+1\\
1, 1\end{array}\!;z\right),
\]
which we refer to as the Bateman-${\mathscr Z}$ polynomial. 
By using \cite[Theorem 48]{rai}, we obtain the generating function
\[
\sum_{n=0}^\infty {\mathscr Z}_n(z)\, t^n=\frac 1{1-t}\,{}_1F_1
\left(\!\!\begin{array}{c} \racion 12\\ 1\end{array}\!;
\dfrac{-4zt}{(1-t)^2} \right). 
\]
Using the above information, the Bateman polynomials are 
defined  as \cite[p. 25]{bat2}
\[
{\mathscr B}_n(z)={}_3F_ 2\left(\begin{array}{c}-n, n+1, 
\racion{z+1}2\\1, 1\end{array}\!;1\right).
\]
He also obtained the generating functions
\[
\sum_{n=0}^\infty {\mathscr B}_n(z)\, t^n=\frac 1{1-t}\,{}_2F_1\left(\!\!
\begin{array}{c} \racion 12, \racion{z+1}2\\ 1\end{array}\!;\dfrac{-4t}
{(1-t)^2}\right), \]
\[
\sum_{n=0}^\infty \left({\mathscr B}_n(z-2)-{\mathscr B}_n(z)\right)\, t^n=
\frac {2t}{(1-t)^3}\,{}_2F_1\left(\!\!\begin{array}{c} \racion 32,
\racion{z+1}2\\ 2 \end{array}\!;\dfrac{-4t}{(1-t)^2}
\right). \]
\begin{lemma} \label{lem1}
Let $t\in \mathbb C$. Then the following relation holds:
\begin{equation} \label{1.1}
\sum_{n=0}^\infty {\mathscr B}_m(-2n-1)\frac{t^n}
{n!}=e^t {\mathscr Z}_m(-t).
\end{equation}
\end{lemma}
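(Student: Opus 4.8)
The plan is to expand both sides in terms of their defining hypergeometric series and collapse the left-hand side onto the series for ${\mathscr Z}_m(-t)$ by interchanging two summations. First I would substitute $z=-2n-1$ into the definition of ${\mathscr B}_m$, which turns the numerator parameter $\racion{z+1}2$ into $-n$; hence
\[
{\mathscr B}_m(-2n-1)={}_3F_2\!\left(\begin{array}{c}-m,\ m+1,\ -n\\ 1,\ 1\end{array};1\right)
=\sum_{k=0}^{m}\frac{(-m)_k\,(m+1)_k\,(-n)_k}{(1)_k^2\,k!},
\]
which is a genuinely finite sum because $(-m)_k$ vanishes for $k>m$.

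Next I would insert this finite sum into the left-hand side of \eqref{1.1} and interchange the order of the (finite) $k$-sum with the (convergent) $n$-sum; this is legitimate since for each fixed $k$ one has $\sum_{n\ge0}|(-n)_k|\,|t|^n/n!=|t|^k e^{|t|}<\infty$. The one computational step that carries the argument is the evaluation of the resulting inner sum $\sum_{n=0}^{\infty}(-n)_k\,t^n/n!$: using $(-n)_k=(-1)^k\,n!/(n-k)!$ for $n\ge k$ and $(-n)_k=0$ for $0\le n<k$, the sum reduces to $(-1)^k t^k\sum_{j\ge0}t^j/j!=(-t)^k e^t$. Substituting back gives
\[
\sum_{n=0}^{\infty}{\mathscr B}_m(-2n-1)\frac{t^n}{n!}
=e^t\sum_{k=0}^{m}\frac{(-m)_k\,(m+1)_k}{(1)_k^2\,k!}\,(-t)^k
=e^t\,{}_2F_2\!\left(\begin{array}{c}-m,\ m+1\\ 1,\ 1\end{array};-t\right)
=e^t\,{\mathscr Z}_m(-t),
\]
which is precisely the asserted identity.

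I do not anticipate any real obstacle: the proof is essentially bookkeeping with Pochhammer symbols, and the only point needing a word of justification is the interchange of summations, which is immediate because the $k$-summation is a finite polynomial sum. An alternative phrasing would recognize $\sum_{n}(-n)_k t^n/n!$ as a disguised derivative identity for $e^t$, but the direct manipulation above is the cleanest route and keeps the hypotheses minimal (only $t\in\mathbb C$ is used, via the everywhere-convergent exponential series).
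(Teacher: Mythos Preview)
Your argument is correct: the substitution $z=-2n-1$ turns the free numerator parameter into $-n$, the inner sum $\sum_{n\ge 0}(-n)_k\,t^n/n!=(-t)^k e^t$ is computed correctly, and the interchange of summations is harmless since the $k$-sum is finite.

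The paper, however, does not prove Lemma~\ref{lem1} directly. It first establishes the $q$-analogue~\eqref{3.11} (itself obtained from the more general identity~\eqref{3.10} by letting $\lambda\to\infty$ and setting $j=0$), and then recovers Lemma~\ref{lem1} by substituting $t\mapsto t(1-q)$ and sending $q\uparrow 1$. The underlying mechanism in the proof of~\eqref{3.10} is the same series interchange you use, but carried out at the basic-hypergeometric level with the $q$-binomial theorem in place of the exponential series. Your route is more elementary and self-contained, proving the classical statement without any detour through $q$-calculus; the paper's route has the advantage that the $q$-identity and its classical limit are obtained in one stroke, which fits the paper's overall program of deriving $q$-analogues first.
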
 
The Bateman polynomial $\mathscr B_n$ was generalized 
by Pasternack in \cite{pas}. 
He defines the Pasternack polynomial $\mathscr B_n^m$ as
\[
{\mathscr B}_n^m(z)={}_3F_2\left(\!\!\begin{array}{c}-n, n+1, 
\racion{z+m+1}2\\1, m+1\end{array}\!;1\right),
\]
for $m\in{\mathbb C}\backslash \{-1\}$. 
The Pasternack polynomials reduce to the Bateman polynomials 
when $m=0$. 

Further information regarding such polynomials and their connection 
with (classical) orthogonal polynomials can be found in \cite{koe}.
Indeed, we can write the Pasternack polynomials in terms of the 
continuous Hahn polynomials \cite[(9.4.1)]{kost} as follows \cite[p. 893]{koe}:
\[
\mathscr B_n^m(z)= {\frac 1{i^n (m+1)_n}}\, p_n\left( \frac{-iz}2; 
\frac {1+m}2, \frac{1-m}2,\frac{1-m}2, \frac{1+m}2\right).
\]

We also consider the Sylvester polynomials, defined as 
(see \cite[p. 185]{srma})
\[
\varphi_n(z)=\frac{z^n}{n!}\, {}_2F_0\left(\begin{array}{c} 
-n, z\\ \--\end{array}\!;\frac{-1}z\right).
\]
Notice that we also can write the Sylvester polynomials in terms 
of (classical) orthogonal polynomials \cite{kost}
\[
\varphi_n(z)=(-1)^n L_n^{(-z-n)}(z)=\frac{z^n}{n!}\, C_n(-z;z).
\]
Here $L_n^{(\alpha)}$ and $C_n$ represent the Laguerre and 
Charlier polynomials, respectively.
It is also known that the Sylvester polynomials satisfy the 
generating functions
\[
\sum_{n=0}^\infty \varphi_n(z)\, t^n=\frac {e^{z t}}{(1-t)^z},
\]
\[
\sum_{n=0}^\infty (\lambda)_n\,\varphi_n(z)\, t^n = \frac {1}
{(1-zt)^\lambda}\, {}_2F_0\left(\!\!\begin{array}{c} \lambda, z\\ 
\--\end{array}\!;\frac{t}{1-zt}\right).
\]

The Ces\`aro polynomials are defined as \cite[p. 449]{srma}
\[
g^{(s)}_n(z)=\frac{(1+s)_n}{n!}\, {}_2F_1\left(
\!\!\begin{array}{c}-n, 1\\ -s-n\end{array}\!;z \right).
\]
Observe that this family can be written in terms of Jacobi
polynomials \cite[p. 449]{srma} as
\[
g^{(s)}_n(z)=P^{(s+1,-s-n-1)}_n(2z-1).
\]
Furthermore, they satisfy the generating functions
\[
\sum_{n=0}^\infty g_n^{(s)}(z)\, t^n=(1-t)^{-s-1}(1-zt)^{-1},
\]
\[
\sum_{n=0}^\infty \binom {n+\ell} \ell \, g^{(s)}_{n+\ell}(z)\, t^n=
(1-t)^{-s-1-\ell}(1-zt)^{-1} g^{(s)}_\ell\left(\frac{z(1-t)}{1-zt}\right),
\]
where $\ell=0,1,2,...$.

The aim of this paper is to obtain $q$-analogues of 
all these families of polynomials as well as $q$-analogues 
of the generating functions stated above. 
The structure of this paper is as follows. In Section 2
we give some preliminaries on $q$-calculus and we define 
some $q$-analogues of Bateman-$\mathscr Z$, Bateman, 
Sylvester,  Pasternack, and Ces\`aro polynomials.
In Section 3 we state and derive $q$-analogues of the given
generating functions associated with the $q$-Bateman-$\mathscr Z$, 
$q$-Bateman, $q$-Sylvester, $q$-Pasternack, and 
$q$-Ces\`aro polynomials.
\section{Some $q$-Calculus. The Definitions.}
Let $q\in \mathbb C$, $0<|q|<1$. A $q$-analogue of the hypergeometric 
series ${}_pF_r$ is the basic hypergeometric series \cite{gara}
\[
{}_r\phi_s\left(\!\!\begin{array}{c} a_1, a_2, \dots, a_r\\ b_1, b_2, \dots, b_s
\end{array}\!;q, z\right)=\sum_{k=0}^\infty \frac{(a_1;q)_k (a_2;q)_k \cdots
(a_r;q)_k}{(b_1;q)_k(b_2;q)_k\cdots (b_s;q)_k}\left((-1)^kq^{\racion{k(k-1)}2}
\right)^{1+s-r} \frac{z^k}{(q;q)_k},
\]
where $q\ne 0$ when $r>s+1$, and the $(b_i)$ are such that the 
denominator never vanishes.  
We also need to define some other $q$-analogues, such as the 
$q$-analogue of a 
number $[a]_q$, factorial $[a]_q!$, 
and the Pochhammer symbol (rising factorial), $(a)_n$. 
These $q$-analogues are given as follows:
\[
[a]_q=\frac{1-q^a}{1-q},
\]
\[
[0]_q!=1,\qquad [n]_q!=\prod_{k=1}^n [k]_q,\ n\in \mathbb N,
\]
\[
(a;q)_0=1,\quad (a;q)_n=(1-a)(1-q)\cdots (1-aq^{n-1}),\quad n\in \mathbb N.
\]
We also will need a $q$-analogue of the {\it binomial theorem}
\cite[(1.3.2)]{gara}
\begin{equation}
\label{2.1}
{}_1\phi_0\left(\!\!\begin{array}{c} a\\ \-- \end{array}\!;q, z\right)
=\sum_{k=0}^\infty \frac{(a;q)_k}{(q;q)_k}\, z^k=\frac{(az;q)_\infty}
{(z;q)_\infty},\qquad |z|<1,\ |q|< 1,
\end{equation}
and the $q$-binomial coefficient 
\begin{equation}
\left[\!\!\begin{array}{c}n \\ k 
\end{array}\!\!\right]_q = \frac{(q;q)_n}{(q;q)_{k} (q;q)_{n-k}}.
\label{qbinomcoef}
\end{equation}

Taking into account the previous definitions and results, we define $q$ 
analogues of the above introduced polynomials. We define
\[
\mathscr Z_{n}(z;q)={}_2\phi_2\left(\!\!\begin{array}{c} q^{-n}, q^{n+1}\\
q, q \end{array}\!;q, q^n z\right),
\]
and the $q$-Bateman polynomial as
\[
\mathscr B_{n}(z;q)={}_3\phi_2\left(\!\!\begin{array}{c} q^{-n}, q^{n+1},
q^{\racion{1+z}2}\\ q, q \end{array}\!;q, q^n\right),
\]
the $q$-Pasternack polynomial as
\[
\mathscr B^m_{n}(z;q)={}_3\phi_2\left(\!\!\begin{array}{c} q^{-n}, q^{n+1},
q^{\racion{1+z+m}2}\\ q, q^{m+1} \end{array}\!;q, q^n\right),
\]
the $q$-Sylvester polynomial as
\[
\varphi_{n}(z;q)=\frac{z^n}{(q;q)_n}\,{}_2\phi_0\left(\!\!\begin{array}{c}
q^{-n}, q^{z}\\ \-- \end{array}\!;q, q^n z^{-1}\right),
\]
the $q$-Ces\`aro polynomial as
\[
g^{(s)}_{n}(z;q)=\frac{(q^{s+1};q)_n}{(q;q)_n}\,{}_2\phi_1\left(\!\!
\begin{array}{c}q^{-n}, q\\ q^{-s-n} \end{array}\!;q, z\right).
\]
\begin{remark}
Note that $\mathscr B^0_{n}(z;q)=\mathscr B_{n}(z;q)$.
\end{remark}
\begin{lemma}
The $q$-Ces\`aro polynomial can be written as
\begin{equation} \label{qCP} 
g^{(s)}_{n}(z;q)=\sum_{k=0}^n \left[\!\!\begin{array}{c}k+s \\ s 
\end{array}\!\!\right]_q (zq^s)^{n-k}.
\end{equation}
\end{lemma}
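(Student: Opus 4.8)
\textit{Proof plan.} The plan is a direct term-by-term identification. First I would expand the ${}_2\phi_1$ in the definition of $g^{(s)}_n(z;q)$ using the series in Section~2: since this is a ${}_2\phi_1$ (two numerator parameters, one denominator parameter) the factor $\bigl((-1)^k q^{k(k-1)/2}\bigr)^{1+1-2}$ is trivial, and the $(q;q)_j$ in the numerator of the series cancels the $(q;q)_j$ coming from $z^j/(q;q)_j$, so
\[
{}_2\phi_1\!\left(\!\!\begin{array}{c}q^{-n}, q\\ q^{-s-n}\end{array}\!;q,z\right)=\sum_{j=0}^{n}\frac{(q^{-n};q)_j}{(q^{-s-n};q)_j}\,z^j,
\]
the sum terminating at $j=n$ because $(q^{-n};q)_j=0$ for $j>n$. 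Hence $g^{(s)}_n(z;q)=\dfrac{(q^{s+1};q)_n}{(q;q)_n}\sum_{j=0}^{n}\dfrac{(q^{-n};q)_j}{(q^{-s-n};q)_j}\,z^j$, and it remains to match this with \eqref{qCP} after the substitution $k=n-j$.

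Next I would rewrite the ``negative base'' $q$-shifted factorials in positive form. From $1-q^{-\ell+i}=-q^{-\ell+i}(1-q^{\ell-i})$ one gets, for $0\le j\le n$,
\[
(q^{-n};q)_j=(-1)^j q^{\binom j2-nj}\frac{(q;q)_n}{(q;q)_{n-j}},\qquad
(q^{-s-n};q)_j=(-1)^j q^{\binom j2-(s+n)j}\frac{(q;q)_{s+n}}{(q;q)_{s+n-j}},
\]
so that the signs and the quadratic powers of $q$ cancel in the quotient, leaving
\[
\frac{(q^{-n};q)_j}{(q^{-s-n};q)_j}=q^{sj}\,\frac{(q;q)_n\,(q;q)_{s+n-j}}{(q;q)_{n-j}\,(q;q)_{s+n}}.
\]
Setting $j=n-k$ and using $(q^{s+1};q)_n=(q;q)_{s+n}/(q;q)_s$, the prefactor $(q^{s+1};q)_n/(q;q)_n$ multiplied by this quotient collapses to $q^{s(n-k)}\dfrac{(q;q)_{s+k}}{(q;q)_s(q;q)_k}=q^{s(n-k)}\left[\!\!\begin{array}{c}s+k\\ s\end{array}\!\!\right]_q$; thus the $j=n-k$ term of $g^{(s)}_n(z;q)$ equals $\left[\!\!\begin{array}{c}s+k\\ s\end{array}\!\!\right]_q(zq^s)^{n-k}$, and summing over $k=0,\dots,n$ yields \eqref{qCP}.

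There is no conceptual obstacle here; the only delicate point is the bookkeeping of the exponents of $q$ and of the signs in the two inversion formulas, and keeping straight which $q$-factorials become ``long'' (the $(q;q)_{s+n}$) and which become ``short'' (the $(q;q)_{s+k}$) after reindexing by $k=n-j$. As an alternative one could first derive the generating function $\sum_{n}g^{(s)}_n(z;q)t^n$ (the $q$-analogue of $(1-t)^{-s-1}(1-zt)^{-1}$) and read off coefficients, or argue by induction on $n$ via a contiguous relation for the ${}_2\phi_1$, but the direct identification above is the most economical.
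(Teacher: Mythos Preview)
Your argument is correct. The paper actually states this lemma without proof, so there is nothing to compare against; your direct term-by-term identification fills the gap cleanly, and the only mildly delicate step---the cancellation of signs and the $q^{\binom j2}$ factors between $(q^{-n};q)_j$ and $(q^{-s-n};q)_j$---is handled accurately. One small remark: writing $(q;q)_{s+n}$ and $(q;q)_s$ tacitly treats $s$ as a nonnegative integer; if one wants \eqref{qCP} for general $s$ (with $\left[\begin{smallmatrix}s+k\\ s\end{smallmatrix}\right]_q$ read as $(q^{s+1};q)_k/(q;q)_k$), the same computation goes through using $(q^{s+1};q)_n=(q^{s+1};q)_{n-j}(q^{s+n-j+1};q)_j$ instead of passing through $(q;q)_{s+n}$. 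Also note that your suggested alternative of first deriving the generating function and then reading off coefficients would be circular here, since the paper invokes \eqref{qCP} precisely to prove that generating function.
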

\section{The Generating Functions}
\begin{theorem} \label{thm:3.1} For any $t\in \mathbb C$ small enough, 
the  $q$-Bateman-${\mathscr Z}$ 
and the $q$-Bateman polynomials satisfy the following generating 
functions:
\begin{align}\label{3.1}
\sum_{n=0}^\infty {\mathscr Z}_{n}(z;q)\, t^n=& 
\frac 1{1-t}\,{}_4\phi_5\left(\!\!\begin{array}{c} -q, q^{\racion 12}, 
-q^{\racion 12}, 0,\\ q, qt^{\racion 12},-qt^{\racion 12}, (qt)^{\racion12}, 
-(qt)^{\racion 12}\end{array}\!;q, z t\right),\\[3mm]
\label{3.2} \sum_{n=0}^\infty {\mathscr B}_{n}(z;q)\,t^n=&\frac 
1{1-t}\,{}_5\phi_5 \left(\!\!\begin{array}{c} -q,q^{\racion12}, -q^{\racion
12},  q^{\racion{z+1}2}, 0\\ q, (qt)^{\racion12}, -(qt)^{\racion 12}, 
qt^{\racion12}, -qt^{\racion12} \end{array}\!;q,t\right), 
\\[3mm] \label{3.3} \sum_{n=0}^\infty \Big({\mathscr B}_{n}(z-2;q)-{\mathscr 
B}_{n}(z;q)\Big)\, t^n=&\frac {q^{\racion {z-1}2} (1+q) t}{(t;q)_3}\,{}_5
\phi_5\left(\!\!\begin{array}{c} -q^2, q^{\racion 32}, -q^{\racion 32},  
q^{\racion{z+1}2}, 0\\ q^2, (q^3t)^{\racion 12},-(q^3t)^{\racion 12}
,q^2t^{\racion 12}, -q^2t^{\racion 12} \end{array}\!;q, qt\right).
\end{align}
\end{theorem}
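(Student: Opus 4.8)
The plan is to prove all three generating functions by the same mechanism: expand each $q$-polynomial into its defining double sum, interchange the order of summation, perform the inner sum over $n$ in closed form using the $q$-binomial theorem \eqref{2.1}, and then recognize the resulting series in $t$ (and the auxiliary variable) as a single basic hypergeometric series of the stated type. First I would treat \eqref{3.1}. Writing
\[
\sum_{n=0}^\infty {\mathscr Z}_n(z;q)\,t^n
=\sum_{n=0}^\infty t^n\sum_{k=0}^\infty
\frac{(q^{-n};q)_k(q^{n+1};q)_k}{(q;q)_k(q;q)_k(q;q)_k}(-1)^k q^{\binom k2}(q^nz)^k,
\]
I would use the standard reductions $(q^{-n};q)_k=(-1)^k q^{\binom k2-nk}\dfrac{(q;q)_n}{(q;q)_{n-k}}$ and $(q^{n+1};q)_k=\dfrac{(q^{n+1};q)_\infty}{(q^{n+k+1};q)_\infty}$, so that after swapping the sums and shifting $n\mapsto n+k$ the inner sum over $n$ becomes a ${}_1\phi_0$ that \eqref{2.1} evaluates to $1/(t;q)_\infty$ times an explicit shifted factor; the prefactor $1/(1-t)$ emerges from the $k=0$ versus general-$k$ bookkeeping. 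Collecting the $k$-dependent factors, the quadratic power $q^{k^2}$ (coming from $q^{\binom k2}\cdot q^{nk}$ with $n\to n+k$) is exactly what is absorbed by the four extra denominator parameters $qt^{1/2},-qt^{1/2},(qt)^{1/2},-(qt)^{1/2}$ together with the numerator $0$; writing $q^{k^2}=\bigl((q;q)_\infty\text{-type ratios}\bigr)$ via the pairs $(\pm q^{1/2};q)_k$, $(-q;q)_k$ in the numerator and the four $t$-dependent pairs in the denominator recovers precisely the ${}_4\phi_5$ on the right of \eqref{3.1}. The identity $(q;q)_{2k}=(q;q^2)_k(q^2;q^2)_k$ and its variants, i.e.\ the factorization of $q$-shifted factorials at doubled index, is the algebraic engine that converts the $q^{k^2}/(q;q)_k$ packaging into the displayed parameter lists.

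For \eqref{3.2} the argument is identical in structure: the $q$-Bateman polynomial ${\mathscr B}_n(z;q)$ has the same ${}_2\phi_2$ core as ${\mathscr Z}_n$ with the single extra numerator parameter $q^{(1+z)/2}$ and argument $q^n$ in place of $q^nz$, so the same interchange-and-sum-over-$n$ step goes through verbatim, contributing one more numerator factor $(q^{(1+z)/2};q)_k$ and hence the ${}_5\phi_5$ with numerator $-q,q^{1/2},-q^{1/2},q^{(z+1)/2},0$. I would simply remark that \eqref{3.2} follows from the computation for \eqref{3.1} \emph{mutatis mutandis}, replacing $z\mapsto$ (the extra parameter role) appropriately.

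The main obstacle is \eqref{3.3}, the difference generating function. Here I would first compute, for fixed $n$, the difference ${\mathscr B}_n(z-2;q)-{\mathscr B}_n(z;q)$ at the level of the ${}_3\phi_2$: the only $z$-dependence is through $(q^{(z-1)/2};q)_k$ versus $(q^{(z+1)/2};q)_k$, and the telescoping identity $(q^{(z-1)/2};q)_k-(q^{(z+1)/2};q)_k=q^{(z-1)/2}(1-q^k)(q^{(z+1)/2};q)_{k-1}=q^{(z-1)/2}(1-q^k)(q^{(z+1)/2};q)_k/(1-q^{(z+1)/2}q^{k-1})$ must be massaged into a clean single term; the cleanest route is to pull out the factor $q^{(z-1)/2}(1+q)$ and a shift $k\mapsto k+1$, which drops the $k=0$ term and relabels the remaining sum, and this $k$-shift is exactly what turns the base-$q$ parameters $-q,q^{1/2},-q^{1/2}$ into $-q^2,q^{3/2},-q^{3/2}$ and replaces the prefactor $1/(1-t)$ by $t/(t;q)_3$ (the $(1-t)(1-qt)(1-q^2t)$ in the denominator arising from three applications of the $q$-binomial sum after the index shift, mirroring the classical $(1-t)^3$). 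After that relabeling one is back in the situation of \eqref{3.2} with shifted parameters and argument $qt$, and the ${}_5\phi_5$ drops out. The delicate points to get right are the precise power of $q$ produced by combining $q^{\binom k2}$, $q^{nk}$, and the $k\mapsto k+1$ shift (so that the argument is $qt$ and not $t$ or $q^2t$), and verifying that the emergent denominator is genuinely $(t;q)_3=(1-t)(1-qt)(1-q^2t)$; both are routine but error-prone bookkeeping with $q$-shifted factorials, so I would carry out that computation with care and cross-check the $q\to1$ limit against Bateman's classical identity
\[
\sum_{n=0}^\infty\bigl({\mathscr B}_n(z-2)-{\mathscr B}_n(z)\bigr)t^n
=\frac{2t}{(1-t)^3}\,{}_2F_1\!\left(\!\!\begin{array}{c}\tfrac32,\tfrac{z+1}2\\2\end{array}\!;\dfrac{-4t}{(1-t)^2}\right),
\]
which provides a strong consistency check on all constants.
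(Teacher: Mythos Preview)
Your proposal is correct and follows essentially the same route as the paper: expand the defining ${}_r\phi_s$, use $(q^{-n};q)_k=(-1)^kq^{\binom k2-nk}(q;q)_n/(q;q)_{n-k}$ together with $(q^{n+1};q)_k=(q;q)_{n+k}/(q;q)_n$, swap sums and shift $n\mapsto n+k$, sum the inner ${}_1\phi_0$ by \eqref{2.1} to produce $1/(t;q)_{2k+1}$, and then factor $(q;q)_{2k}$ and $(qt;q)_{2k}$ via $(a;q)_{2k}=(a^{1/2},-a^{1/2},(aq)^{1/2},-(aq)^{1/2};q)_k$ to obtain the stated parameter lists; for \eqref{3.3} the paper likewise uses the telescoping $(q^{(z-1)/2};q)_k-(q^{(z+1)/2};q)_k=-\,q^{(z-1)/2}(1-q^k)(q^{(z+1)/2};q)_{k-1}$ (note the missing sign in your version) followed by a $k\mapsto k+1$ shift and then an additional $n\mapsto n+1$ shift, which is what manufactures both the $(1+q)t$ prefactor and the denominator $(t;q)_3$. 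The quadratic exponent that actually appears is $q^{k(k-1)}$ rather than $q^{k^2}$, but this is only a bookkeeping slip and does not affect the method.
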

\begin{proof} Let us start by proving identity \eqref{3.2}, namely
\[
\sum_{n=0}^\infty {\mathscr B}_{n}(z;q)\,t^n=
\sum_{n=0}^\infty \sum_{k=0}^n \frac{(q^{-n};q)_k(q^{n+1};q)_k
(q^{\racion{z+1}2};q)_k}{(q;q)^3_k}(q^kt)^n.
\]
By using the identity
\[
(q^{-n};q)_k=\frac{(q;q)_n}{(q;q)_{n-k}}(-1)^k q^{\racion{k(k-1)}2-nk},
\qquad k=0, 1, \dots, n,
\]
we have
\[
\sum_{n=0}^\infty {\mathscr B}_{n}(z;q)\,t^n=\sum_{n=0}^\infty 
\sum_{k=0}^n \frac{(q;q)_{n+k}(q^{\racion{z+1}2};q)_k}
{(q;q)_{n-k}(q;q)^3_k}(-1)^k q^{\racion{k(k-1)}2}t^n.
\]
Next, we rearrange the double summation and set 
$n\mapsto n+k$, obtaining
\begin{align*}
\sum_{n=0}^\infty {\mathscr B}_{n}(z;q)\,t^n=&\sum_{k=0}^\infty
\sum_{n=0}^\infty \frac{(q;q)_{n+2k}(q^{\racion{z+1}2};q)_k}
{(q;q)_{n}(q;q)^3_k}(-1)^k q^{\racion{k(k-1)}2}t^{n+k}\\[3mm]
=& \sum_{k=0}^\infty\frac{(q;q)_{2k}(q^{\racion{z+1}2};q)_k}
{(q;q)^3_k}(-t)^k q^{\racion{k(k-1)}2}
\sum_{n=0}^\infty \frac{(q^{2k+1};q)_{n}}{(q;q)_{n}}t^{n}.
\end{align*}
By using the identity
$$
(a;q)_{2n}=(a^{\racion 12};q)_n(-a^{\racion 12};q)_n((aq)^{\racion12};q)_n
(-(aq)^{\racion12};q)_n,
$$
and \eqref{2.1}, we obtain for the generating function
\begin{align*}
=& \sum_{k=0}^\infty\frac{(-q;q)_k(q^{\racion 12};q)_k(-q^{\racion 12};q)_k
(q^{\racion{z+1}2};q)_k}{(q;q)^2_k}(-t)^k q^{\racion{k(k-1)}2}
\frac{(q^{2k+1}t;q)_{\infty}}{(t;q)_{\infty}} \\[3mm]
=& \sum_{k=0}^\infty\frac{(-q;q)_k(q^{\racion 12};q)_k(-q^{\racion 12};q)_k
(q^{\racion{z+1}2};q)_k}{(t;q)_{2k+1}(q;q)^2_k}(-t)^k q^{\racion{k(k-1)}2}
\\[3mm]=&\frac1{1-t}\sum_{k=0}^\infty\frac{(-q;q)_k(q^{\racion 12};q)_k
(-q^{\racion 12};q)_k(q^{\racion{z+1}2};q)_k}{(qt;q)_{2k}(q;q)^2_k}(-t)^k
q^{\racion{k(k-1)}2}\\ =&\frac1{1-t}\sum_{k=0}^\infty\frac{(-q;q)_k
(q^{\racion 12};q)_k(-q^{\racion 12};q)_k(q^{\racion{z+1}2};q)_k}{(q;q)_k
((qt)^{\racion 12};q)_{k}(-(qt)^{\racion 12};q)_{k}(qt^{\racion 12};q)_{k}
(-qt^{\racion 12};q)_{k}}(-1)^k q^{\racion{k(k-1)}2}\frac{t^k}{(q;q)_k}.
\end{align*}
Hence the identity follows. 
In order to prove identity \eqref{3.3}:
\begin{align*}
\sum_{n=0}^\infty \Big({\mathscr B}_{n}(z-2;q)-{\mathscr B}_{n}(z;q)
\Big)\, t^n=&\sum_{n=0}^\infty\sum_{k=0}^n\frac{(q^{-n};q)_k(q^{n+1};q)_k}
{(q;q)^3_k}q^{nk}\left((q^{\racion{z-1}2};q)_k-(q^{\racion{z+1}2}
;q)_k\right)t^n\\[3mm]=& \sum_{n=0}^\infty\sum_{k=0}^n\frac{(q^{-n};q)_k
(q^{n+1};q)_k}{(q;q)^3_k}q^{nk}(q^{\racion{z+1}2};q)_{k-1}(1-q^k)
(-q^{\racion{z-1}2})t^n \\[3mm]=& \sum_{n=0}^\infty\sum_{k=0}^n\frac{(q;q)_{n+k}
(q^{\racion{z+1}2};q)_{k-1}(-1)^{k-1} q^{\racion{z-1}2} q^{\racion{k(k-1)}2}}
{(q;q)_{n-k}(q;q)_{k-1}(q;q)^2_k}t^n.
\end{align*}
Taking into account that the above expression vanishes at $k=0$ 
we set $k\mapsto k+1$, rearrange the double sum, and set 
$n\mapsto n+k$, yielding
\begin{align*}
=& \sum_{k=0}^\infty\sum_{n=0}^\infty\frac{(q;q)_{n+2k+1}
(q^{\racion{z+1}2};q)_{k}(-1)^{k} q^{\racion{z-1}2} q^{\racion{k(k+1)}2}}{
(q;q)_{n-1}(q;q)_{k}(q;q)_{k+1}(q;q)_{k+1}}\,t^{n+k}.
\end{align*}
Here, again, the series vanishes at $n=0$ so we set $n\mapsto n+1$. 
Applying some basic identities of $q$-Pochhammer symbols, we obtain
\begin{align*}
=&\, (1+q)q^{\racion {z-1}2}t\sum_{k=0}^\infty\sum_{n=0}^\infty
\frac{(q^3;q)_{n+2k}(q^{\racion{z+1}2};q)_{k} q^{\racion{k(k-1)}2}}
{(q;q)_{n}(q;q)_{k}(q^2;q)_{k}(q^2;q)_{k}}\,(-qt)^kt^{n}\\[3mm]
=& (1+q)q^{\racion {z-1}2}t\sum_{k=0}^\infty\frac{(q^3;q)_{2k}
(q^{\racion{z+1}2};q)_{k}}{(q;q)_{k}(q^2;q)_{k}(q^2;q)_{k}}\,
 q^{\racion{k(k-1)}2} (-qt)^k\sum_{n=0}^\infty \frac{(q^{3+2k};q)_n}
{(q;q)_{n}}\,t^{n}.
\end{align*}
Applying again \eqref{2.1} and simplifying 
one has
\begin{align*}
=& (1+q)q^{\racion {z-1}2}t\sum_{k=0}^\infty\frac{(q^3;q)_{2k}
(q^{\racion{z+1}2};q)_{k}}{(q;q)_{k}(q^2;q)_{k}(q^2;q)_{k}}\,
 q^{\racion{k(k-1)}2} (-qt)^k\frac{1}{(t;q)_{3+2k}}\\[3mm]
=& (1+q)q^{\racion {z-1}2}t\sum_{k=0}^\infty\frac{(q^{\racion 32};q)_{k}
(-q^{\racion 32};q)_{k}(-q^2;q)_{k}(q^{\racion{z+1}2};q)_{k}}{(q;q)_{k}
(q^2;q)_{k}}\, q^{\racion{k(k-1)}2} (-qt)^k\frac{1}{(t;q)_{3+2k}}\\[3mm]
=& \frac{(1+q)}{(t;q)_3}\,q^{\racion {z-1}2}t\sum_{k=0}^\infty 
\frac{(q^{\racion32};q)_{k}(-q^{\racion 32};q)_{k}(-q^2;q)_{k}
(q^{\racion{z+1}2};q)_{k}q^{\racion{k(k-1)}2}}{(q;q)_{k}(q^2;q)_{k}
((tq^3)^{\racion 12};q)_k(-(tq^3)^{\racion 12};q)_k (q^2t^{\racion 12};q)_k
(-q^2t^{\racion 12};q)_k}\,(-qt)^k.
\end{align*}
Therefore the identity follows. Let us prove the generating function 
\eqref{3.3}. We have 
\begin{align*}
\sum_{n=0}^\infty {\mathscr Z}_{n}(z;q)\, t^n=& \sum_{n=0}^\infty 
\sum_{k=0}^n\frac{(q^{-n};q)_k (q^{n+1};q)_k (-1)^k q^{\racion{k(k-1)}2}}{
(q;q)^3_k}\, z^k q^{nk}t^n\\[3mm] = & \sum_{n=0}^\infty \sum_{k=0}^n
\frac{(q;q)_{n+k}\, q^{k(k-1)}}{(q;q)_{n-k}(q;q)^3_k}\, z^kt^n.
\end{align*}
As in the previous identities, we rearrange the double sums and 
set $n\mapsto n+k$ obtaining
\begin{align*}
= & \sum_{k=0}^\infty \sum_{n=0}^\infty \frac{(q;q)_{n+2k}\, q^{k(k-1)}}
{(q;q)_{n}(q;q)^3_k}\, z^kt^{n+k}=\sum_{k=0}^\infty
\frac{(q;q)_{2k}\, q^{k(k-1)}}{(q;q)^3_k} (zt)^k
\sum_{n=0}^\infty \frac{(q^{2k+1};q)_n}{(q;q)_{n}}\,t^n\\[3mm]
=& \sum_{k=0}^\infty \frac{(q;q)_{2k}\, q^{k(k-1)}}{(q;q)^3_k}\,
\frac{(zt)^k}{(t;q)_{2k+1}}=\frac1{1-t}\sum_{k=0}^\infty \frac{(-q;q)_{k}
(q^{\racion 12};q)_k(-q^{\racion 12};q)_k\, q^{k(k-1)}}{(q;q)_k (q;q)_k\
(tq;q)_{2k}}\,(zt)^k\\[3mm] =& \frac1{1-t}\sum_{k=0}^\infty \frac{(-q;q)_{k}
(q^{\racion 12};q)_k(-q^{\racion 12};q)_k\, q^{k(k-1)}}{(q;q)_k (q;q)_k
((tq)^{\racion 12};q)_{k}(-(tq)^{\racion 12};q)_{k}(qt^{\racion 12};q)_{k}
(-qt^{\racion 12};q)_{k}}\,(zt)^k.
\end{align*}
Hence the identity follows.
\end{proof}
\begin{theorem} For any $t\in \mathbb C$ small enough, the following identities hold:
\begin{align}\label{3.11}
\sum_{n=0}^\infty {\mathscr B}_{m}(-2n-1;q)\,\frac{t^n}{(q;q)_n}=
&\frac{1}{(t;q)_\infty}{\mathscr Z}_{m}(tq^{-n};q),
\end{align}
\begin{align}\label{3.9}
\sum_{n=0}^\infty {\mathscr B}^{j}_{m}(-2n-1-j;q)\,t^n = & 
\frac 1{1-t}\, {}_2\phi_2\left(\begin{array}{c} q^{-m}, q^{m+1} \\ 
q^{j+1}, t q \end{array} \!; q; t\right),
\end{align}
\begin{align}\label{3.10}
\sum_{n=0}^\infty {\mathscr B}^{j}_{m}(-2n-1-j;q)\,\frac{(q^\lambda;q)_n}
{(q;q)_n}\, t^n= & \ \frac 1{(t;q)_\lambda} \, {}_3\phi_3\left(
\begin{array}{c} q^{-m}, q^{m+1}, q^\lambda \\ q, q^{j+1}, t q^\lambda 
\end{array} \!; q; t\right), \qquad \lambda \in \mathbb C.
\end{align}
\end{theorem}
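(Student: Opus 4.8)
The three identities all fit the template used for Theorem~\ref{thm:3.1}, so I would attack them the same way.

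The crucial first observation is the specialization that makes them work: substituting $z=-2n-1-j$ into the numerator parameter $q^{(1+z+j)/2}$ of the ${}_3\phi_2$ defining $\mathscr B^j_m$ yields exactly $q^{-n}$ (and likewise $z=-2n-1$ gives $q^{-n}$ for $\mathscr B_m$). Hence, expanding the polynomial as a terminating sum over $k=0,\dots,m$,
\[
\mathscr B^j_m(-2n-1-j;q)=\sum_{k=0}^m\frac{(q^{-m};q)_k(q^{m+1};q)_k(q^{-n};q)_k}{(q;q)_k^2(q^{j+1};q)_k}\,q^{mk},
\]
so that the entire $n$-dependence sits in the single factor $(q^{-n};q)_k$. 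This is the $q$-analogue of the classical fact, behind Lemma~\ref{lem1}, that $\mathscr B_m(-2n-1)$ carries an $(-n)_k$.

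Next I would interchange the finite $k$-sum with the $n$-sum, reducing each case to an inner series in $n$: $\sum_n(q^{-n};q)_k\,t^n$ for \eqref{3.9}, $\sum_n\frac{(q^{-n};q)_k}{(q;q)_n}\,t^n$ for \eqref{3.11}, and $\sum_n\frac{(q^\lambda;q)_n(q^{-n};q)_k}{(q;q)_n}\,t^n$ for \eqref{3.10}. For each I would use $(q^{-n};q)_k=\frac{(q;q)_n}{(q;q)_{n-k}}(-1)^kq^{\binom k2-nk}$ (which kills the terms with $n<k$), shift $n\mapsto n+k$, and split off $q$-Pochhammer factors so as to reach a ${}_1\phi_0$ summable by the $q$-binomial theorem \eqref{2.1}. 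Concretely: for \eqref{3.9} the shifted series becomes, up to prefactors, $\sum_l\frac{(q^{k+1};q)_l}{(q;q)_l}(q^{-k}t)^l=\frac{(qt;q)_\infty}{(q^{-k}t;q)_\infty}=\frac1{(q^{-k}t;q)_{k+1}}$, whose $k$-free factor $1-t$ supplies the overall $1/(1-t)$; for \eqref{3.11} the $(q;q)_n$ in the denominator turns the shifted series into a pure $q$-exponential $\sum_l(q^{-k}t)^l/(q;q)_l=1/(q^{-k}t;q)_\infty$, from which $1/(t;q)_\infty$ factors out — the natural $q$-analogue of the $e^t$ in Lemma~\ref{lem1}; and for \eqref{3.10} one first writes $(q^\lambda;q)_{n+k}=(q^\lambda;q)_k(q^{\lambda+k};q)_n$, applies \eqref{2.1}, and extracts $1/(t;q)_\lambda$.

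What remains in each case is a single sum over $k$, and the last — and, I expect, most delicate — step is to massage its $q$-shifted-factorial prefactor into exactly the claimed basic hypergeometric series ($\mathscr Z_m$ for \eqref{3.11}, a ${}_2\phi_2$ for \eqref{3.9}, a ${}_3\phi_3$ for \eqref{3.10}). The inner evaluations naturally throw up awkward symbols such as $(q^{-k}t;q)_{k+1}$, and converting these into the $(tq;q)_k$, $(tq^\lambda;q)_k$ and $q^{mk}$-type structures of the statement will require the same careful sign-and-power-of-$q$ juggling that fills the proof of Theorem~\ref{thm:3.1} (e.g. $(a;q)_k=(-a)^kq^{\binom k2}(q^{1-k}/a;q)_k$, the $(a;q)_{2k}$-splitting, and the reduction $(q^{-m};q)_k\leftrightarrow(q;q)_m/(q;q)_{m-k}$). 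I would keep the hypothesis ``$t$ small enough'' in play precisely to license the interchange of summations and the term-by-term use of \eqref{2.1}, noting that $|q^{-k}t|<1$ is needed for each $k\le m$.
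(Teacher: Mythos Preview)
Your proposal is correct and follows essentially the same mechanism as the paper: the key specialization $q^{(1+z+j)/2}=q^{-n}$, the identity $(q^{-n};q)_k=\frac{(q;q)_n}{(q;q)_{n-k}}(-1)^kq^{\binom k2-nk}$, the shift $n\mapsto n+k$, and closure of the inner sum by the $q$-binomial theorem \eqref{2.1}. The only organizational difference is that the paper proves \eqref{3.10} alone and then obtains \eqref{3.9} by setting $\lambda=1$ and \eqref{3.11} by letting $\lambda\to\infty$ with $j=0$, whereas you treat the three cases in parallel; their route is a little more economical and sidesteps the need for the sharper convergence condition $|q^{-k}t|<1$ that your direct attack on \eqref{3.9} requires.
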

\begin{proof} Observe that if we set $\lambda=1$ in \eqref{3.10} we obtain
\eqref{3.9}. Since $|q|<1$, taking the limit $\lambda \to \infty$, 
 setting $j=0$, and taking into account the definition of the 
 $q$-Bateman polynomials yields \eqref{3.11}.
Let us prove \eqref{3.10}. Setting $z=-2n-1-j$ in the 
$q$-Pasternack polynomial, and using their basic hypergeometric 
expansion, we have 
\begin{align*}
\sum_{n=0}^\infty {\mathscr B}^{j}_{m}(-2n-1-j;q)\,\frac{(q^\lambda;q)_n}
{(q;q)_n}\, t^n= & \sum_{n=0}^\infty \sum_{k=0}^n \frac{(q^{-m};q)_k
(q^{m+1};q)_{k}(q^{-n};q)_k}{(q;q)^2_k (q^{j+1};q)_k}\, q^{nk}
t^n\, \frac{(q^\lambda;q)_n}{(q;q)_n}\\[3mm]=
&\sum_{n=0}^\infty\sum_{k=0}^n \frac{(q^{-m};q)_k(q^{m+1};q)_{k}
 (-1)^kq^{\racion{k(k-1)}2} t^n (q^\lambda;q)_n}
{(q;q)^2_k(q^{j+1};q)_k(q;q)_{n-k}}\, 
\\[3mm]=& \sum_{k=0}^\infty\sum_{n=0}^\infty \frac{(q^{-m};q)_k
(q^{m+1};q)_{k}(-t)^k q^{\racion{k(k-1)}2}}{(q;q)^2_k(q^{j+1};q)_k}
\frac{t^n (q^\lambda;q)_{n+k}}{(q;q)_{n}}
\\[3mm] =&\frac{1}{(t;q)_{\lambda+k}} 
\sum_{k=0}^\infty \frac{(q^{-m};q)_k
(q^{m+1};q)_{k}(q^{\lambda};q)_{k}(-t)^k q^{\racion{k(k-1)}2}}{(q;q)_k
(q;q)_k(q^{j+1};q)_k} \\[3mm]
=& \frac 1{(t;q)_\lambda} {}_3\phi_3\left(\!\!
\begin{array}{c} q^{-m}, q^{m+1}, q^\lambda \\ q, q^{j+1}, t q 
\end{array} \!; q; t\right).
\end{align*} 
\end{proof}

Lemma \ref{lem1} is straightforward as a limiting 
case of expression \eqref{3.11}.
The result follows by setting $t\mapsto t(1-q)$, taking the 
limit $q\uparrow 1$, and taking into account the limiting relations
\[
\lim_{q\uparrow 1}\frac{(1-q)^n}{(q;q)_n}=\frac 1{n!},\qquad 
\lim_{q\uparrow 1}\frac{1}{(t(1-q);q)_\infty}=e^t, 
\]
and
\[
\lim_{q\uparrow 1} {}_2\phi_2\left(\!\!\begin{array}{c} q^{a}, q^{b}\\ q^c, q^d
\end{array} \!; q; t(q-1)\right)={}_2F_2\left(\!\!\begin{array}{c} a, b\\ c, d
\end{array} \!; t\right).
\]
In fact, we obtain a new generating function for the Pasternack and 
Bateman polynomials by taking the same limit $q\uparrow 1$.

\begin{corollary}
The following identities hold:
\[
\sum_{n=0}^\infty {\mathscr B}_{m}(-2n-1)\,t^n = 
\frac 1{1-t}\, 
P_m\left(\frac{1+t}{1-t}\right),
\]
\[
\sum_{n=0}^\infty {\mathscr B}_{m}(-2n-1)\,\frac{(\lambda)_n}
{n!}\, t^n=  \ \frac 1{(1-t)^\lambda} \, {}_3F_2\left(
\begin{array}{c} -m, m+1, \lambda \\ 1, 1\end{array}; \frac{-t}{1-t}\right),
\]
\[
\sum_{n=0}^\infty {\mathscr B}^{j}_{m}(-2n-1-j)\,t^n = 
\frac 1{1-t}\, {}_2F_1\left(\begin{array}{c} -m , m+1 \\ 
j+1 \end{array} ; \frac{-t}{1-t}\right),
\]
\[
\sum_{n=0}^\infty {\mathscr B}^{j}_{m}(-2n-1-j)\,\frac{(\lambda)_n}
{n!}\, t^n=  \ \frac 1{(1-t)^\lambda} \, {}_3F_2\left(
\begin{array}{c} -m, m+1, \lambda \\ 1, j+1\end{array}; \frac{-t}{1-t}\right),
\]
where $P_m$ is the Legendre polynomial.
\end{corollary}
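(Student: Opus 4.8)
The plan is to derive all four identities by passing to the limit $q\uparrow1$ in the already-established generating functions \eqref{3.9} and \eqref{3.10}; this is exactly the ``same limit $q\uparrow1$'' mentioned in the paragraph preceding the statement, and here no rescaling $t\mapsto t(1-q)$ is needed because \eqref{3.10} already carries $(q^\lambda;q)_n/(q;q)_n\,t^n$ rather than $t^n/(q;q)_n$. I will treat the fourth identity as the primary case, obtaining it as the $q\uparrow1$ limit of \eqref{3.10}. The third identity is then the $\lambda=1$ specialization of the fourth (equivalently, the $q\uparrow1$ limit of \eqref{3.9}), using ${}_3F_2(-m,m+1,1;1,j+1;x)={}_2F_1(-m,m+1;j+1;x)$. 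The second is the $j=0$ specialization of the fourth, since $\mathscr B^0_m=\mathscr B_m$. Finally, the first is the $j=0$ case of the third combined with the classical hypergeometric representation of the Legendre polynomial, $P_m(x)={}_2F_1(-m,m+1;1;(1-x)/2)$: substituting $x=(1+t)/(1-t)$ gives $(1-x)/2=-t/(1-t)$, whence $\tfrac1{1-t}\,{}_2F_1(-m,m+1;1;-t/(1-t))=\tfrac1{1-t}\,P_m((1+t)/(1-t))$.

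For the limit of \eqref{3.10} I would proceed termwise, using $\lim_{q\uparrow1}(1-q)^{-k}(q^a;q)_k=(a)_k$ and $\lim_{q\uparrow1}(1-q)^k/(q;q)_k=1/k!$. On the left-hand side these yield $\lim_{q\uparrow1}(q^\lambda;q)_n/(q;q)_n=(\lambda)_n/n!$ and, because putting $z=-2n-1-j$ collapses the numerator parameter $q^{(1+z+j)/2}$ of the $q$-Pasternack polynomial to $q^{-n}$, they yield $\mathscr B^j_m(-2n-1-j;q)\to{}_3F_2(-m,m+1,-n;1,j+1;1)=\mathscr B^j_m(-2n-1-j)$ (this ${}_3\phi_2$ carries no regularizing power, since $1+s-r=0$). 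On the right-hand side, $1/(t;q)_\lambda=(tq^\lambda;q)_\infty/(t;q)_\infty\to(1-t)^{-\lambda}$ by the $q$-binomial theorem \eqref{2.1}.

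The delicate point, and the one I expect to be the main obstacle, is the behaviour of the ${}_3\phi_3$ on the right of \eqref{3.10}. Its lower parameter $tq^\lambda$ is \emph{not} of the form $q^b$: as $q\uparrow1$ the factor $(tq^\lambda;q)_k$ tends to $(1-t)^k$, a pure power rather than a Pochhammer-like quantity, and this power precisely absorbs the superfluous $(q;q)_k$ built into the definition of ${}_r\phi_s$. Consequently the powers of $(1-q)$ balance and the limit is an honest ${}_3F_2$, neither vanishing nor producing a ${}_3F_3$; one obtains $\lim_{q\uparrow1}{}_3\phi_3(q^{-m},q^{m+1},q^\lambda;q,q^{j+1},tq^\lambda;q,t)=\sum_{k\ge0}\frac{(-m)_k(m+1)_k(\lambda)_k}{(k!)^2(j+1)_k}\bigl(\tfrac{-t}{1-t}\bigr)^k={}_3F_2(-m,m+1,\lambda;1,j+1;-t/(1-t))$. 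Multiplying this by the limit $(1-t)^{-\lambda}$ of the prefactor gives the fourth identity. Throughout, the exchange of $\lim_{q\uparrow1}$ with the sum over $n$ (and with the inner sum over $k$) is legitimate for $|t|$ small by the uniform convergence of the series involved, so termwise passage to the limit is justified; the remaining three identities then follow by the specializations described in the first paragraph.
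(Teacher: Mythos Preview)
Your proposal is correct and follows exactly the route the paper indicates: the paper gives no explicit proof of this corollary, merely remarking that it is obtained ``by taking the same limit $q\uparrow 1$'' in \eqref{3.9}--\eqref{3.10}, and you have supplied precisely those details, including the correct handling of the non-$q^b$ lower parameter $tq^\lambda$ whose limit $(1-t)^k$ keeps the powers of $1-q$ balanced so that a genuine ${}_3F_2$ emerges.
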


\begin{theorem} \label{thm:3.3}For any $t\in \mathbb C$ small enough, the 
$q$-Pasternack  polynomials satisfy the following generating function:
\begin{align}
\label{3.4} \sum_{n=0}^\infty {\mathscr B}^m_{n}(z;q)\,t^n=
&\frac 1{1-t}\,{}_5\phi_5\left(\!\!\begin{array}{c} -q,q^{\racion12}, 
-q^{\racion12}, q^{\racion{z+m+1}2}, 0 \\ q^{m+1}, (qt)^{\racion12}, 
-(qt)^{\racion 12}, qt^{\racion12}, -qt^{\racion12}
\end{array}\!;q,t\right).
\end{align}
\end{theorem}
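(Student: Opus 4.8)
The plan is to follow the same strategy used to establish \eqref{3.2} in the proof of Theorem~\ref{thm:3.1}, since the $q$-Bateman polynomial is precisely the $m=0$ specialization of the $q$-Pasternack polynomial and the manipulations are structurally insensitive to the extra parameter. First I would insert the basic hypergeometric expansion of $\mathscr B^m_n(z;q)$ to write
\[
\sum_{n=0}^\infty {\mathscr B}^m_{n}(z;q)\,t^n = \sum_{n=0}^\infty \sum_{k=0}^n \frac{(q^{-n};q)_k(q^{n+1};q)_k(q^{\racion{z+m+1}2};q)_k}{(q;q)^2_k(q^{m+1};q)_k}(q^kt)^n,
\]
where, compared to the $q$-Bateman case treated in \eqref{3.2}, the only change is that one of the three $(q;q)_k$ factors in the denominator has been replaced by $(q^{m+1};q)_k$.

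Next I would apply the identity $(q^{-n};q)_k = \frac{(q;q)_n}{(q;q)_{n-k}}(-1)^k q^{\racion{k(k-1)}2-nk}$ to cancel the $q^{nk}$ factor, combine $(q;q)_n(q^{n+1};q)_k=(q;q)_{n+k}$, then interchange the order of summation and substitute $n\mapsto n+k$. This separates the double sum into a $k$-sum with coefficient $\frac{(q;q)_{2k}(q^{\racion{z+m+1}2};q)_k}{(q;q)^2_k(q^{m+1};q)_k}(-t)^k q^{\racion{k(k-1)}2}$ multiplied by the inner series $\sum_{n\ge0}\frac{(q^{2k+1};q)_n}{(q;q)_n}t^n$, which by the $q$-binomial theorem \eqref{2.1} equals $(q^{2k+1}t;q)_\infty/(t;q)_\infty = 1/(t;q)_{2k+1}$.

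To finish, I would use the quadratic factorization $(a;q)_{2k}=(a^{\racion12};q)_k(-a^{\racion12};q)_k((aq)^{\racion12};q)_k(-(aq)^{\racion12};q)_k$ twice: once with $a=q$ to split $(q;q)_{2k}$, cancelling one plain $(q;q)_k$ against the denominator and producing the numerator factors $(-q;q)_k(q^{\racion12};q)_k(-q^{\racion12};q)_k$; and once with $a=qt$, after writing $(t;q)_{2k+1}=(1-t)(qt;q)_{2k}$, to split $(qt;q)_{2k}$ into the four denominator factors $((qt)^{\racion12};q)_k$, $(-(qt)^{\racion12};q)_k$, $(qt^{\racion12};q)_k$, $(-qt^{\racion12};q)_k$. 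Pulling out the prefactor $\frac1{1-t}$ and rewriting the leftover $q^{\racion{k(k-1)}2}t^k$ together with the surviving $(q;q)_k$ in the universal slot, one reads off exactly the ${}_5\phi_5$ on the right-hand side of \eqref{3.4}.

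I do not expect a genuine obstacle here: the computation is routine and formally parallel to that of \eqref{3.2}. The only point requiring care is bookkeeping — verifying that the $(q;q)_k$ factor consumed by the quadratic identity on $(q;q)_{2k}$ is one of the two plain $(q;q)_k$ factors rather than $(q^{m+1};q)_k$, so that $(q^{m+1};q)_k$ survives intact as a lower parameter of the resulting ${}_5\phi_5$, and checking that the ``$0$'' numerator parameter (contributing $(0;q)_k=1$) is correctly counted so the series has five upper and five lower parameters with $\bigl((-1)^kq^{\racion{k(k-1)}2}\bigr)^{1+s-r}=1$. Convergence for $|t|$ small is inherited from the same termwise estimates as in Theorem~\ref{thm:3.1}.
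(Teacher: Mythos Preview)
Your proposal is correct and follows exactly the same route as the paper's proof: expand the ${}_3\phi_2$, apply the $(q^{-n};q)_k$ identity, shift $n\mapsto n+k$, sum the inner series via the $q$-binomial theorem, and split $(q;q)_{2k}$ and $(qt;q)_{2k}$ by the quadratic factorization. One small slip in your bookkeeping remark: with five upper and five lower parameters one has $1+s-r=1$, not $0$, so the factor $(-1)^kq^{\racion{k(k-1)}2}$ is genuinely present in the ${}_5\phi_5$ term and must match the $(-1)^kq^{\racion{k(k-1)}2}$ you carry from the $(q^{-n};q)_k$ identity --- it does not equal $1$.
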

\begin{proof} Taking into account the expression of the 
basic hypergeometric series for these polynomials we have
\begin{align*}
\sum_{n=0}^\infty {\mathscr B}^m_{n}(z;q)\,t^n&= \sum_{n=0}^\infty 
\sum_{k=0}^n \frac{(q^{-n};q)_k(q^{n+1};q)_k(q^{\racion{z+m+1}
2};q)_k}{(q;q)^2_k(q^{m+1};q)_k}\,(q^kt)^n\\[3mm] =&
\sum_{n=0}^\infty \sum_{k=0}^n \frac{(q;q)_{n+k}(q^{\racion{z+m
+1}2};q)_k}{(q;q)_{n-k}(q;q)^2_k(q^{m+1};q)_k}(-1)^k 
q^{\racion{k(k-1)}2}t^n\\[3mm] =& \sum_{k=0}^\infty \sum_{n=0}^\infty 
\frac{(q;q)_{n+2k}(q^{\racion{z+m+1}2};q)_k}{(q;q)_{n}(q;q)^2_k
(q^{m+1};q)_k}(-1)^k q^{\racion{k(k-1)}2}t^{n+k}\\[3mm]
=& \sum_{k=0}^\infty \frac{(q;q)_{2k}
(q^{\racion{z+m+1}2};q)_k}{(q;q)^2_k(q^{m+1};q)_k}(-1)^k 
q^{\racion{k(k-1)}2}\,t^{k}\frac{(q^{2k+1}t;q)_\infty}{(t;q)_\infty}\\[3mm] =& 
\frac1{1-t}\sum_{k=0}^\infty \frac{(q;q)_{2k}
(q^{\racion{z+m+1}2};q)_k}{(q;q)^2_k(q^{m+1};q)_k(qt;q)_{2k}}(-1)^k 
q^{\racion{k(k-1)}2}\,t^{k}\\[3mm] =&\frac1{1-t}\sum_{k=0}^\infty 
\frac{(q^{\racion 12};q)_{k}(-q^{\racion 12};q)_{k}, (-q;q)_{k}(q^{\racion{z+m
+1}2};q)_k\, (-1)^k q^{\racion{k(k-1)}2}}{(q;q)_k(q^{m+1};q)_k((qt)^{\racion 
12};q)_{k}(-(qt)^{\racion 12};q)_{k}(qt^{\racion 12};q)_{k}(-qt^{\racion 
12};q)_{k}}\,t^{k} .
\end{align*}
\end{proof}

Observe that we obtain \eqref{3.2} by setting $m=0$ in \eqref{3.4}.
\begin{theorem}
The $q$-Sylvester polynomials satisfy the following generating functions:
\begin{align}
\label{3.6}\sum_{n=0}^\infty \varphi_{n}(z;q)\, t^n=& \frac {(q^z t; q)_\infty}
{(t,zt; q)_\infty},\\[3mm]
\label{3.7}\sum_{n=0}^\infty (q^\lambda;q)_n\,\varphi_{n}(z;q)\, t^n=&\frac1
{(zt;q)_\lambda}\,{}_2\phi_1\left(\!\!\begin{array}{c} q^\lambda, q^z\\ 
ztq^\lambda \end{array}\!;q,t\right),
\end{align}
where the principal values of $q^z$ and $q^\lambda$ are taken.
\end{theorem}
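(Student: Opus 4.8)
The strategy is to first rewrite the defining ${}_2\phi_0$ representation of $\varphi_n(z;q)$ as a finite single sum that is manifestly the $n$-th coefficient of a Cauchy product; once that is done, both generating functions follow at once from the $q$-binomial theorem \eqref{2.1}.

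First I would expand the ${}_2\phi_0$, noting that for $r=2$, $s=0$ the series carries the factor $\big((-1)^kq^{\racion{k(k-1)}2}\big)^{-1}$, so that
\[
\varphi_n(z;q)=\frac{1}{(q;q)_n}\sum_{k=0}^n\frac{(q^{-n};q)_k(q^z;q)_k}{(q;q)_k}(-1)^kq^{-\racion{k(k-1)}2}q^{nk}z^{n-k}.
\]
Applying the identity $(q^{-n};q)_k=\frac{(q;q)_n}{(q;q)_{n-k}}(-1)^kq^{\racion{k(k-1)}2-nk}$ already used in the proof of Theorem~\ref{thm:3.1}, the signs and the powers of $q$ cancel identically, leaving
\[
\varphi_n(z;q)=\sum_{k=0}^n\frac{(q^z;q)_k}{(q;q)_k}\,\frac{z^{n-k}}{(q;q)_{n-k}}.
\]
Then \eqref{3.6} is immediate: the right-hand side is the $n$-th coefficient of the product $\big(\sum_{k=0}^\infty\frac{(q^z;q)_k}{(q;q)_k}t^k\big)\big(\sum_{j=0}^\infty\frac{(zt)^j}{(q;q)_j}\big)$, the first factor being $\frac{(q^zt;q)_\infty}{(t;q)_\infty}$ by \eqref{2.1} and the second being $\frac1{(zt;q)_\infty}$ (the $a=0$ instance of \eqref{2.1}, i.e.\ the $q$-exponential $e_q$), and their product is $\frac{(q^zt;q)_\infty}{(t,zt;q)_\infty}$.

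For \eqref{3.7} I would insert $(q^\lambda;q)_n$ into this double sum, interchange the order of summation, shift $n\mapsto n+k$, and split $(q^\lambda;q)_{n+k}=(q^\lambda;q)_k(q^{\lambda+k};q)_n$, obtaining
\[
\sum_{n=0}^\infty(q^\lambda;q)_n\varphi_n(z;q)t^n=\sum_{k=0}^\infty\frac{(q^z;q)_k(q^\lambda;q)_k}{(q;q)_k}\,t^k\sum_{n=0}^\infty\frac{(q^{\lambda+k};q)_n}{(q;q)_n}(zt)^n.
\]
The inner sum equals $\frac{(q^{\lambda+k}zt;q)_\infty}{(zt;q)_\infty}=\frac{1}{(zt;q)_{\lambda+k}}=\frac{1}{(zt;q)_\lambda(ztq^\lambda;q)_k}$ by \eqref{2.1}, so pulling the $k$-independent factor $\frac1{(zt;q)_\lambda}$ to the front produces exactly the asserted ${}_2\phi_1$.

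The one place that needs care is the meaning of $(a;q)_\mu$ for non-integer $\mu$: one takes $(a;q)_\mu:=(a;q)_\infty/(aq^\mu;q)_\infty$, which makes the two splittings $(q^\lambda;q)_{n+k}=(q^\lambda;q)_k(q^{\lambda+k};q)_n$ and $(zt;q)_{\lambda+k}=(zt;q)_\lambda(ztq^\lambda;q)_k$ valid for every $\lambda\in\mathbb C$. Restricting to $t$ with $|t|$ small enough that $|zt|<1$ (as in the earlier theorems) makes every series absolutely convergent and justifies the interchanges of summation. I do not expect a serious obstacle here; the only step demanding a moment's attention is the exact cancellation that converts the ${}_2\phi_0$ into the simple convolution form of $\varphi_n(z;q)$, and everything after that is routine manipulation of $q$-Pochhammer symbols.
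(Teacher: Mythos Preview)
Your proof is correct and, for \eqref{3.7}, essentially identical to the paper's: the same simplification of $(q^{-n};q)_k$, the same shift $n\mapsto n+k$, the same splitting $(q^\lambda;q)_{n+k}=(q^\lambda;q)_k(q^{\lambda+k};q)_n$, and the same application of the $q$-binomial theorem to the inner sum.

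The one organizational difference is how \eqref{3.6} is obtained. The paper proves \eqref{3.7} first and then recovers \eqref{3.6} by letting $\lambda\to\infty$ (so that $(q^\lambda;q)_n\to 1$ and the ${}_2\phi_1$ degenerates, after which one applies \eqref{2.1} once more). You instead isolate the convolution identity $\varphi_n(z;q)=\sum_{k=0}^n\frac{(q^z;q)_k}{(q;q)_k}\,\frac{z^{n-k}}{(q;q)_{n-k}}$ at the outset and read off \eqref{3.6} directly as the Cauchy product of two ${}_1\phi_0$ series. Your route to \eqref{3.6} is a bit more transparent and avoids the limiting argument; the paper's route has the virtue of deriving both identities from a single computation. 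Either way the content is the same.
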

\begin{proof}
Let us prove the generating function \eqref{3.7} by using an analogous 
method as before, namely 
\begin{align*}
\sum_{n=0}^\infty (q^\lambda;q)_n\,\varphi_{n}(z;q)\, t^n=&
\sum_{n=0}^\infty \sum_{k=0}^n \frac{(q^{-n};q)_{k}(q^z;q)_k(-1)^k
q^{-\racion{k(k-1)}2}(q^\lambda;q)_n}{(q;q)_{n}(q;q)_k}\, q^{nk}
z^{n-k}t^n\\[3mm] =& \sum_{n=0}^\infty\sum_{k=0}^n \frac{(q^z;q)_{k}
(q^\lambda;q)_n}{(q;q)_{n-k}(q;q)_k}\, z^{n-k}t^n=\sum_{k=0}^\infty
\sum_{n=0}^\infty \frac{(q^z;q)_{k}(q^\lambda;q)_{n+k}}{(q;q)_{n}
(q;q)_k}\, z^{n}t^{n+k}\\[3mm]
=& \sum_{k=0}^\infty \frac{(q^z;q)_{k}(q^\lambda;q)_{k}}{(q;q)_k}\,t^{k}
\sum_{n=0}^\infty  \frac{(q^{\lambda+k};q)_n(zt)^{n}}{(q;q)_{n}}=
\sum_{k=0}^\infty \frac{(q^z;q)_{k}(q^\lambda;q)_{k}}{(q;q)_k
(zt;q)_{\lambda+k}}\,t^{k}\\[3mm] =& \frac 1{(zt;q)_\lambda}\sum_{k=0}^\infty 
\frac{(q^z;q)_{k}(q^\lambda;q)_{k}}{(q;q)_k(ztq^\lambda;q)_{k}}\,t^{k}.
\end{align*}
Since $|q|<1$, \eqref{3.6} follows from taking $\lambda \to \infty$ and 
applying \eqref{2.1}, both generating functions hold.
\end{proof}

\begin{theorem} For any $t\in \mathbb C$ small enough, the $q$-Ces\`aro 
polynomials satisfy the following generating function:
\begin{align}\label{3.8}
\sum_{n=0}^\infty g^{(s)}_{n}(z;q)\,t^n=& \frac 1{(1-t z q^s)(t;q)_{s+1}}.
\end{align}
\end{theorem}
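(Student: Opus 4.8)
The plan is to exploit the closed form for the $q$-Ces\`aro polynomials already established in \eqref{qCP}, namely
\[
g^{(s)}_{n}(z;q)=\sum_{k=0}^n \left[\!\!\begin{array}{c}k+s \\ s
\end{array}\!\!\right]_q (zq^s)^{n-k},
\]
rather than going back to the ${}_2\phi_1$ representation. Substituting this into the generating series and interchanging the order of summation (valid for $|t|$ small, since everything converges absolutely there) gives
\[
\sum_{n=0}^\infty g^{(s)}_{n}(z;q)\,t^n
=\sum_{k=0}^\infty \left[\!\!\begin{array}{c}k+s \\ s
\end{array}\!\!\right]_q \sum_{n=k}^\infty (zq^s)^{n-k}\,t^n
=\sum_{k=0}^\infty \left[\!\!\begin{array}{c}k+s \\ s
\end{array}\!\!\right]_q t^k \sum_{m=0}^\infty (zq^st)^m,
\]
after the shift $n\mapsto m+k$. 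The inner geometric series sums to $1/(1-zq^st)$, which is exactly the first factor of the claimed right-hand side, so it remains to show that
\[
\sum_{k=0}^\infty \left[\!\!\begin{array}{c}k+s \\ s
\end{array}\!\!\right]_q t^k=\frac{1}{(t;q)_{s+1}}.
\]

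For that last identity I would invoke the $q$-binomial theorem \eqref{2.1} directly. Writing the $q$-binomial coefficient out via \eqref{qbinomcoef},
\[
\left[\!\!\begin{array}{c}k+s \\ s
\end{array}\!\!\right]_q=\frac{(q;q)_{k+s}}{(q;q)_{s}(q;q)_{k}}
=\frac{(q^{s+1};q)_{k}}{(q;q)_{k}},
\]
so the sum becomes ${}_1\phi_0(q^{s+1};-;q,t)=\sum_{k\ge0}\frac{(q^{s+1};q)_k}{(q;q)_k}t^k$, which by \eqref{2.1} equals $(q^{s+1}t;q)_\infty/(t;q)_\infty=1/(t;q)_{s+1}$ (telescoping the infinite $q$-Pochhammer quotient). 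Multiplying the two factors together yields \eqref{3.8}. One should note that when $s$ is not a nonnegative integer one interprets $(q;q)_{k+s}/(q;q)_s$ as $(q^{s+1};q)_k$, so the argument goes through verbatim with $(t;q)_{s+1}$ read as $(t;q)_\infty/(tq^{s+1};q)_\infty$.

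The main obstacle here is essentially bookkeeping rather than anything deep: one must justify the interchange of the double sum, which is immediate from absolute convergence for $|t|<\min(1,|zq^s|^{-1})$ (and then the result extends by analytic continuation in $t$), and one must be careful that \eqref{qCP} was stated for integer $s$ while the theorem is phrased for general $s$ — so the cleanest writeup either restricts to $s$ a nonnegative integer or first re-derives the geometric-type expansion directly from the ${}_2\phi_1$ form. If one prefers to avoid \eqref{qCP} altogether, the alternative is to start from $g^{(s)}_{n}(z;q)=\frac{(q^{s+1};q)_n}{(q;q)_n}\sum_{k}\frac{(q^{-n};q)_k(q;q)_k}{(q^{-s-n};q)_k(q;q)_k}z^k$, apply $(q^{-n};q)_k=\frac{(q;q)_n}{(q;q)_{n-k}}(-1)^kq^{\binom k2-nk}$ together with the analogous reversal for $(q^{-s-n};q)_k$, swap the order of summation and shift $n\mapsto n+k$, and then sum the resulting inner series with \eqref{2.1} — the same three moves used in the proof of Theorem \ref{thm:3.1} — but the route through \eqref{qCP} is shorter and I would present that one.
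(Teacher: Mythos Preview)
Your proof is correct and follows essentially the same route as the paper: insert the expansion \eqref{qCP}, interchange/shift the double sum, and recognise the resulting two factors as a geometric series $1/(1-zq^st)$ and a ${}_1\phi_0$ summed by \eqref{2.1} to $1/(t;q)_{s+1}$. The only cosmetic difference is that the paper first reverses the inner index $k\mapsto n-k$ before swapping, whereas you swap directly; your version is in fact more explicit about the final $q$-binomial step and the convergence/analytic-continuation issues than the paper's two-line argument.
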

\begin{proof}
Let us prove this by using \eqref{qCP} and some 
basic properties of the $q$-Pochhammer symbol:
\begin{align*}
\sum_{n=0}^\infty g^{(s)}_{n}(z;q)\,t^n =& \sum_{n=0}^\infty
\sum_{k=0}^n \left[\!\!\begin{array}{c}{k+s} \\ s \end{array}\!\!\right]_q
\, (q z^s)^{n-k} t^n=\sum_{n=0}^\infty\sum_{k=0}^n 
\left[\!\!\begin{array}{c}{n-k+s} \\ s \end{array}\!\!\right]_q
\, (q z^s)^{k} t^n\\[3mm]
=& \sum_{k=0}^\infty\sum_{n=0}^\infty
\left[\!\!\begin{array}{c}{n+s} \\ s \end{array}\!\!\right]_q
\, (t q z^s)^{k} t^n=\frac 1{(1-t z q^s)(t;q)_{s+1}}.
\end{align*}
\end{proof}
We leave it to the reader to check that by taking limit $q\uparrow1$, we 
recover the original expressions by using the limit identities we 
gave before Theorem \ref{thm:3.3}. In the future, it would be interesting to see 
if it is possible to use $q$-calculus 
to obtain $q$-analogues of the results obtained in \cite{bat1}.

\section*{Acknowledgments}
The author R. S. Costas-Santos acknowledges financial support by Direcci\'on 
General de Investigaci\'on, Ministerio de Econom\'ia y Com\-pe\-ti\-ti\-vi\-dad 
of Spain, grant MTM2015-65888-C4-2-P.
\bibliographystyle{elsarticle-num}

\smallskip
\smallskip
\smallskip
\smallskip
\smallskip
\smallskip
\smallskip

\begin{minipage}{0.39\textwidth}
\begin{flushleft}
\begin{footnotesize}
Howard S. Cohl\\ 
Applied and Computational Mathematics Division \\
National Institute of Standards and Technology \\
Mission Viejo, CA 92694\\ USA  \\
howard.cohl@nist.gov
\end{footnotesize}
\end{flushleft}
\end{minipage}
\noindent\begin{minipage}{0.31\textwidth}
\begin{flushleft}
\begin{footnotesize}
Roberto S. Costas-Santos\\
Dpto. F\'isica y Matem\'aticas \\
Universidad de Alcal\'a \\
Alcal\'a de Henares, Madrid\\ SPAIN\\
rscosa@gmail.com
\end{footnotesize}
\end{flushleft} 
\end{minipage}
\begin{minipage}{0.25\textwidth}
\begin{flushleft}
\begin{footnotesize}
Tanay V. Wakhare\\
University of Maryland \\ College Park, MD 20742\\ USA\\
twakhare@gmail.com
\end{footnotesize}
\end{flushleft} 
\end{minipage}
\end{document}